\newcommand{\R}{\ensuremath{\mathbb{R}}}
\newcommand{\ep}{\varepsilon}
\newcommand{\de}{\delta}
\newcommand{\De}{\Delta}
\newtheorem{theorem}{Theorem}[section]
\newtheorem{proposition}{Proposition}
\theoremstyle{definition}
\newtheorem{remark}{Remark}
\title[Shadowing near nonhyperbolic fixed points]
      {Shadowing near nonhyperbolic fixed points}
\author[Alexey A.\ Petrov and Sergei Yu.\ Pilyugin]{}
\subjclass{Primary: 37C50.} 
 \keywords{Dynamical system, shadowing, fixed point,
 Lyapunov function.}
 \email{al.petrov239@gmail.com}
 \email{sp@sp1196.spb.edu}
\thanks{Supported by the Russian Foundation for Basic Research
(project 12-01-00275); the first author is also supported by the Chebyshev
Laboratory, Faculty of Mathematics and Mechanics, St.\ Petersburg State 
University, under grant 11.G34.31.0026 of the Government of Russian Federation.}
\begin{document}
\maketitle

\centerline{\scshape Alexey A.\ Petrov }
\medskip
{\footnotesize
 \centerline{Faculty of Mathematics and Mechanics St.\ Petersburg State 
 University }
   \centerline{University av.,\ 28, 198504, St.\ Petersburg, Russia}
} 

\medskip

\centerline{\scshape Sergei Yu.\ Pilyugin}
\medskip
{\footnotesize
 \centerline{ Faculty of Mathematics and Mechanics St.\ Petersburg State 
  University}
   \centerline{University av.,\ 28, 198504, St.\ Petersburg, Russia,}
   \centerline{St.\ Petersburg State Electrotechnical University (LETI)}
   \centerline{Popova str., 5, 197376, St.\ Petersburg, Russia}
}

\bigskip

 \centerline{(Communicated by Lan Wen)}

\begin{abstract}
We use Lyapunov type functions to find conditions
of finite shadowing in a neighborhood of a nonhyperbolic
fixed point of a one-dimensional or two-dimensional
homeomorphism or diffeomorphism. A new concept of shadowing
in which we control the size of one-step errors 
 is introduced in the case of a nonisolated
fixed point.
\end{abstract}

\section{Introduction}

The shadowing property of dynamical systems (diffeomorphisms or flows)
is now well-studied (see, for example, the monographs [1, 2] and the
recent survey [3]).
This property means that, near approximate trajectories
(so-called pseudotrajectories), there exist exact trajectories of the
system.

In this paper, we are interested in shadowing near fixed points.

Mostly, standard methods allow one to show that 
a diffeomorphism has the shadowing property near a
hyperbolic fixed point, and this property is Lipschitz, see [1].

One can mention several papers which contain methods of proving
the shadowing property for systems with nonhyperbolic behavior
(see, for example, [4, 5]).

Our approach is based on the method of Lyapunov type functions.
First, Lyapunov type functions were used in the study of
shadowing and topological stability by Lewowicz in [6].
We apply results on Lyapunov type functions obtained in our
paper [7].

Let us state the problem of shadowing in general.

Let $f$ be a homeomorphism of a metric space $(M,\mbox{dist})$.

In this paper, we define a finite $d$-pseudotrajectory of $f$
as a set of points $\{p_k\in M:\;0\leq k\leq m\}$ such that
$$
\mbox{dist}(f(p_k),p_{k+1})\leq d, \quad 0\leq k\leq m-1.
$$

In the study of shadowing in noninvariant sets (such as
neighborhoods of fixed points), the concept of finite
shadowing is natural.

We say that $f$ has the finite shadowing property in a set $K\subset M$ if,
for any $\ep>0$, there exists a $d>0$ such that if $\{p_k\in K:
\;0\leq k\leq m\}$ is a $d$-pseudotrajectory of $f$, then there
exists a point $r$ such that
\begin{equation}
\label{0}
\mbox{dist}(f^k(r),p_k)\leq\ep, \quad 0\leq k\leq m.
\end{equation}

Let us emphasize that in the above definition, $d$ depends on $K$
and $\ep$ but not on $m$.

The structure of the paper is as follows.
In Sec. \ref{sec2}, we treat the one-dimensional case. We prove a simple general
statement (Theorem \ref{res1}) and show that, in some cases, the dependence of
$d$ on $\ep$ can be clarified. Section \ref{sec3} is devoted to the method of
Lyapunov type functions developed in [7]. In Sec. \ref{sec4}, we give
general conditions of finite shadowing in the two-dimensional case
and then treat in detail an important example of a diffeomorphism 
of the form
\begin{equation}
\label{3.0}
f(x,y)=(x-x^{2n+1}+X(x,y),y+y^{2m+1}+Y(x,y)),
\end{equation}
where $n,m$ are natural numbers
and $X,Y$ are smooth functions that vanish at the origin
together with their Jacobi matrices.

Finally, Sec. \ref{sec5} is devoted to shadowing near a nonisolated fixed
point. We study a simple (but nontrivial) example of
the diffeomorphism
\begin{equation}
\label{4.0}
f(x,y)=\left(\frac{x}{2}, y(1+x^2)\right),
\end{equation}
for which the origin is a nonisolated fixed point (any point
$(0,y)$ is a fixed one).

Of course, such a system does not have the usual shadowing
property. In this case, we work with a new concept of shadowing
in which we control the size of one-step errors.

Our methods can be applied to dynamical systems with
phase space of arbitrary dimension; in this paper, we restrict
the consideration to one-dimensional and two-dimensional
systems to clarify the presentation of the main ideas.

\section{One-dimensional case}\label{sec2}

First we consider the problem of shadowing near a nonhyperbolic
fixed point of a homeomorphism in the simplest, one-dimensional,
case.

Let $f$ be a homeomorphism of a neighborhood $U$ of a fixed
point $0\in\R$ to its image. 

We consider in detail the case where $f$ is nonhyperbolically 
expanding in a neighborhood of a fixed point; the case of
nonhyperbolic contraction near a fixed point is treated
similarly.

We impose simplest possible conditions on $f$; 
in a sense, precisely this topological form of conditions 
is generalized by our conditions
in the two-dimensional case.
\medskip

{\bf Condition 1. } There exist numbers $a,A>0$ such that
if $|x|\leq A$ and $0<v<a$, then
\begin{equation}
\label{1}
f(x+v)-f(x)>v,\quad f(x-v)-f(x)<-v.
\end{equation}

Denote by $B(r,A)$ the closed $r$-neighborhood of a set $A$.
\medskip
\begin{theorem}\label{res1}
{ If condition} {\em 1} { is satisfied, then $f$ has
the finite shadowing property in the set} ${\mathcal{B}}=B(A,0)$
\end{theorem}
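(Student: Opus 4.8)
The plan is to exploit the expansion Condition 1 to show that any $d$-pseudotrajectory staying in $\mathcal{B}$ can be shadowed, where the key mechanism is that expansion forces nearby orbits to separate, so we can find a starting point $r$ whose true orbit tracks the pseudo-orbit by a nested-interval / intermediate-value argument. First I would set up the shadowing point via backward construction: given a $d$-pseudotrajectory $\{p_k\}_{0\le k\le m}$, I would look for the shadowing point $r$ by finding, for each $k$, the set of starting points whose orbit stays within $\ep$ of $p_j$ for $0\le j\le k$, and show this set is a nonempty nested sequence of closed intervals whose intersection supplies $r$. Concretely, I would define intervals $I_k=\{x: |f^j(x)-p_j|\le\ep \text{ for } 0\le j\le k\}$ and prove by induction that each $I_k$ is a nonempty closed interval, using that $f$ is a homeomorphism (so $f^j$ maps intervals to intervals continuously and monotonically on $U$).

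The heart of the argument is the inductive step, and here the expansion hypothesis does the work. Suppose $I_k=[\alpha_k,\beta_k]$ is nonempty and I want $I_{k+1}\ne\emptyset$. On $I_k$ the orbit at time $k$ lands within $\ep$ of $p_k$; I need a subinterval landing within $\ep$ of $p_{k+1}$ at time $k+1$. The pseudo-orbit satisfies $|f(p_k)-p_{k+1}|\le d$, so it suffices to find $x\in I_k$ with $f^{k+1}(x)$ within $\ep-d$ of $f(p_k)$, i.e. $f^k(x)$ mapping under $f$ close to $f(p_k)$. Using \eqref{1} with $x$ playing the role of the base point and $v=|f^k(x)-p_k|\le\ep$, the expansion inequality $f(p_k+v)-f(p_k)>v$ and its mirror guarantee that as $f^k(x)$ ranges over $[p_k-\ep,p_k+\ep]\cap f^k(I_k)$, the image $f^{k+1}(x)$ sweeps an interval that \emph{strictly contains} the $\ep$-ball image, so by the intermediate value theorem there is a point hitting $f(p_k)$ exactly, hence within $\ep-d$ once $d$ is small. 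Choosing $d$ small relative to $\ep$ (and using that $a$ controls how large $v$ may be, so I must also ensure $\ep<a$, which is fine since finite shadowing only requires small $\ep$) closes the induction.

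I expect the main obstacle to be the bookkeeping that keeps the candidate interval $I_k$ genuinely nonempty and inside the admissible region at every step, rather than the expansion estimate itself. Two technical points need care: first, I must verify that the base point $x$ in \eqref{1} can be taken as $p_k$ (or a point within $\ep$ of it) while still satisfying $|x|\le A$, which is exactly where the restriction to $\mathcal{B}=B(A,0)$ and the requirement that the whole orbit stay $\ep$-close keep things in range; second, I must ensure $v=|f^k(x)-p_k|$ never exceeds $a$, forcing the a priori bound $\ep\le a$ on the accuracy we aim for. Expansion is a double-edged property: it makes the forward image of $I_k$ large (good for surjectivity onto a neighborhood of $f(p_k)$), but it also means $I_k$ itself shrinks, so I would track the preimage lengths to confirm nondegeneracy, most cleanly by running the construction \emph{backward} from $k=m$ and using the fact that a homeomorphism satisfying \eqref{1} has a well-defined monotone inverse that is a contraction on the relevant scale. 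Once the nested nonempty closed intervals $I_0\supset\cdots$ are established, compactness gives a point $r$ in the intersection, and by construction $|f^k(r)-p_k|\le\ep$ for all $0\le k\le m$, which is precisely \eqref{0}.
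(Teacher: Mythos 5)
Your overall architecture is the same as the paper's: your sets $I_k=\bigcap_{j=0}^{k}f^{-j}(B(\ep,p_j))$ are exactly the partial intersections of the sets $f^{-k}(C_k)$, $C_k=B(\ep,p_k)$, whose total intersection the paper shows to be nonempty. But your inductive step has a genuine gap. To apply the intermediate value theorem and produce a point of $I_k$ ``hitting $f(p_k)$ exactly,'' you need $f(p_k)\in f^{k+1}(I_k)$, i.e.\ (by injectivity of $f$) $p_k\in f^k(I_k)$. Your inductive hypothesis --- $I_k$ is a nonempty closed interval --- only gives $f^k(I_k)\subset B(\ep,p_k)$; it does not put $p_k$ (let alone a neighborhood of $p_k$) inside $f^k(I_k)$. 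If, say, $f^k(I_k)$ were the interval $[p_k+\ep/2,\,p_k+\ep]$, then Condition 1 would push $f^{k+1}(I_k)$ strictly to the right of $f(p_k)+\ep/2$, and nothing prevents it from missing $B(\ep,p_{k+1})$ altogether, since Condition 1 bounds expansion only from below: expansion of intervals centered at $p_k$ tells you about $f(B(\ep,p_k))$, not about $f(f^k(I_k))$. So the induction must carry a stronger invariant, namely $f^k(I_k)=B(\ep,p_k)$ (the full ball), and proving that this invariant propagates is precisely the paper's inclusion (\ref{3}): $C_{k+1}\subset f(C_k)$.

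Second, and tied to this, your choice of $d$ is never made uniform. Condition 1 gives only strict pointwise inequalities $f(x+\ep)-f(x)>\ep$ and $f(x-\ep)-f(x)<-\ep$; the margin by which they beat $\ep$ can a priori shrink to $0$ as $x$ varies over $\mathcal{B}$, while the definition of finite shadowing requires $d$ to depend only on $\ep$ and the set (not on the pseudotrajectory, its length $m$, or the individual points $p_k$). The missing step --- and the only place where compactness of $\mathcal{B}$ enters --- is the paper's inclusion (\ref{2}): by compactness of $\mathcal{B}$ and continuity of $f$ there is a single $d>0$ with $B(\ep+d,f(x))\subset f(B(\ep,x))$ for all $x\in\mathcal{B}$. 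With this uniform $d$, the inclusion $C_{k+1}\subset f(C_k)$ follows in one line ($C_{k+1}\subset B(\ep+d,f(p_k))\subset f(C_k)$), the strengthened invariant propagates, and your nested-interval conclusion is then correct. Without it, ``choose $d$ small relative to $\ep$'' is not a valid step: how small $d$ must be at step $k$ depends on $p_k$, which you do not control. (The same lemma is also what your proposed backward construction would need, since it rests on $f^{-1}(C_{k+1})\subset C_k$.)
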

\begin{proof}
Fix $\ep>0$; we assume that $\ep\leq a$.

Condition 1 implies that if $x\in{\mathcal{B}}$, then
$$
B(\ep,f(x))\subset \mbox{Int} f(B(\ep,x)).
$$

The compactness of the set ${\mathcal{B}}$ and the 
continuity of $f$ imply that there exists a $d>0$ such
that
\begin{equation}
\label{2}
B(d,B(\ep,f(x)))\subset f(B(\ep,x))
\end{equation}
for $x\in{\mathcal{B}}$.

Let $\{p_k\in{\mathcal{B}}:\;0\leq k\leq m\}$ be a $d$-pseudotrajectory of $f$;
denote $C_k=B(\ep,p_k)$. 

We claim that
\begin{equation}
\label{3}
C_{k+1}\subset f(C_k),\quad k=0,\dots,m-1.
\end{equation}

Indeed, let $f(C_k)=[s,t]$ and let $y\in C_{k+1}$.
Inclusion (\ref{2}) with $x=p_k$ implies that
$$
f(p_k)+\ep+d\leq t.
$$
Now it follows from the inequalities
$|y-p_{k+1}|\leq\ep$ and $|f(p_k)-p_{k+1}|<d$
that $|y-f(p_k)|<\ep+d$; hence,
$$
y<f(p_k)+\ep+d\leq t.
$$
Similarly one shows that $y>s$, which proves (\ref{3}).

It follows from inclusions (\ref{3}) that the set
$$
C=\cap_{k=0}^m f^{-k}(C_k)
$$
is not empty. Clearly, for any point $r\in C$, inequalities
(\ref{0}) hold.
\end{proof}

In Theorem \ref{res1}, we can say nothing about the dependence of $d$
on $\ep$. For a particular example of a diffeomorphism with a
nonhyperbolic fixed point considered below, 
such a dependence can be clarified.
\medskip

{\bf Example 1. } Let $f(x)=x+x^{2n+1}+X(x)$, where
$n$ is a natural number.

Take $\ep>0$ and let
$$
S(x,\ep):=\frac{(x+\ep)^{2n+1}-x^{2n+1}}{\ep}.
$$
Then $S(x,\ep)$ is a polynomial in $x$ of even degree
with positive leading coefficient.

Since the derivative of $S(x,\ep)$ in $x$ has a unique
zero $x=-\ep/2$, the inequality
$$
S(x,\ep)\geq S\left(\frac{-\ep}{2},\ep\right)=\frac{\ep^{2n}}{2^{2n}}
$$
holds. Thus, the form 
$$
S(x,\ep)-\frac{\ep^{2n}}{1+2^{2n}}
$$
of degree $2n$ is positive definite, and there exists a
positive number $\alpha=\alpha(n)$ such that
\begin{equation}
\label{4}
S(x,\ep)-\frac{\ep^{2n}}{1+2^{2n}}\geq\alpha(x^{2n}+\ep^{2n}).
\end{equation}

Assume that
\begin{equation}
\label{5}
\frac{|X(x+v)-X(x)|}{|v|}=o(x^{2n}+v^{2n}),\quad x,v\to 0.
\end{equation}
Then there exist $A,a>0$ such that if $|x|\leq A$
and $0<\ep<a$, then
\begin{equation}
\label{6}
|X(x+\ep)-X(x)|\leq \frac{\alpha\ep}{2}(x^{2n}+\ep^{2n}).
\end{equation}
Let
\begin{equation}
\label{7}
d=\frac{\ep^{2n+1}}{1+2^{2n}}.
\end{equation}
If $\{p_k\}$ is a $d$-pseudotrajectory with $|p_k|\leq A$,
then it follows from (\ref{4}) and (\ref{6}) that
$$
f(p_k+\ep)-f(p_k)=\ep+\ep S(p_k,\ep)+X(p_k+\ep)-X(p_k)\geq
$$
$$
\geq \ep +\frac{\alpha\ep}{2}(p_k^{2n}+\ep^{2n})+d>\ep+d.
$$
This relation and a similar relation for $f(p_k-\ep)-f(p_k)$
mean that an analog of inclusion (\ref{2}) holds
for any $p_k$.

Repeating the proof of Theorem 1,
we conclude that $f$ has the finite shadowing property 
in the set $B(A,0)$.

Note that, for example, condition (\ref{5}) is valid if $X(x)=x^{2n+2}$.

Our reasoning also shows that if $X(x)\equiv 0$, then $f$ has
the finite shadowing property in the whole line $\R$ with the same
dependence of $d$ on $\ep$ given by (\ref{7}).
\medskip

\begin{remark}In [8], S. Tikhomirov used a different approach to
show that the diffeomorphism $f(x)=x+x^3$ has the shadowing
property with $d=c\ep^3$.
\end{remark}

\section{Lyapunov functions and shadowing}\label{sec3}

As was mentioned in the Introduction, we consider in
detail the problem of finite shadowing for a homeomorphism $f$ of the
plane $\R^2$ in two cases: Case I (the origin is an isolated
nonhyperbolic fixed point) and Case NI (the $y$-axis consists
of fixed points).

We apply the approach based on pairs of Lyapunov type
functions developed in the paper [7].

Let us formulate the sufficient conditions of
finite shadowing obtained in [7] in a form
modified to fit our purposes in this paper.

Let $K_0=\R^2$ in Case I, and let 
$$
K_0=\{(x,y):\;0<|x|<1\}
$$
in Case NI.

We assume that there exist two continuous
nonnegative functions $V$ and $W$ defined on $K_0\times K_0$
such that $V(p,p)=W(p,p)=0$ for any $p\in K_0$
and the conditions (C1)-(C9) stated below are satisfied.

We formulate our conditions not directly in terms of the
functions $W$ and $V$ but in terms of some geometric objects
defined via these functions.

Fix a positive number $\de$ and a point $p\in K_0$ and let
$$
P(\de,p)=\{q\in K_0:\;V(q,p)\leq \de,\;W(q,p)\leq \de\},
$$
$$
Q(\de,p)=\{q\in P(\de,p):\;\;V(q,p)=\de\},\quad 
R(\de,p)=\{q\in P(\de,p):\;\;W(q,p)=\de\},
$$
and
$$
T(\de,p)=\{q\in P(\de,p):\;V(q,p)=0\}.
$$

Set
$$
\mbox{Int}^0\,P(\de,p)=\{q\in P(\de,p):\;V(q,p)<\de,\;W(q,p)<\de\},
$$
$$
\partial^0\,P(\de,p)=Q(\de,p)\cup R(\de,p),
$$
and
$$
\mbox{Int}^0\,Q(\de,p)=\{q\in P(\de,p):\;V(q,p)=\de,\;W(q,p)<\de\}.
$$

Let $K$ be a subset of $K_0$ (in our basic examples, $K$ is a small closed
neigborhood of the origin in Case I and $K$ is a neighborhood of
the origin in $K_0$ in Case NI).

Conditions (C1) -- (C4) contain our assumptions on the
geometry of the sets introduced above. In conditions (C2) -- (C4),
$p\in K_0$, and $\de,\De$ are arbitrary positive numbers such that
$\de<\De$.

(C1) For any $\ep>0$ there exists a $\Delta_0=\De_0(\ep)>0$ such that 
$P(\De_0,p)\subset B(\ep,p)$ for $p\in K$.

(C2) $Q(\de,p)$ is not a retract of  $P(\de,p)$;

(C3) $Q(\de,p)$ is a retract of  
$P(\de,p)\setminus T(\de,p)$;

(C4) there exists a retraction
$$
\sigma:P(\De,p)\to P(\de,p)
$$
such that if $V(q,p)\neq 0$, then $V(\sigma(q),p)\neq 0$.

In the next group of conditions, we state our assumptions on
the behavior of the introduced objects and their images
under the homeomorphism $f$. 

Let $p,q\in K$ and let $0<\de<\De$. 
We say that condition ${\mathcal{W}}(\de,\De,p,q)$ is satisfied if

(C5) $f(P(\de,p))\subset \mbox{Int}^0\, P(\De,q),\quad
f^{-1}(P(\de,q))\subset \mbox{Int}^0\, P(\De,p)$;

(C6) $f(T(\de,p))\subset \mbox{Int}^0\,P(\de,q)$;

(C7) $f(T(\De,p))\cap Q(\de,q)=\emptyset$;

(C8) $f(P(\de,p))\cap \partial^0\, P(\de,q)
\subset \mbox{Int}^0\, Q(\de,q)$;

(C9) $f(Q(\de,p))\cap P(\de,q)=\emptyset$. 
\medskip

\begin{figure}[h]\centering
\def\svgwidth{10cm}
 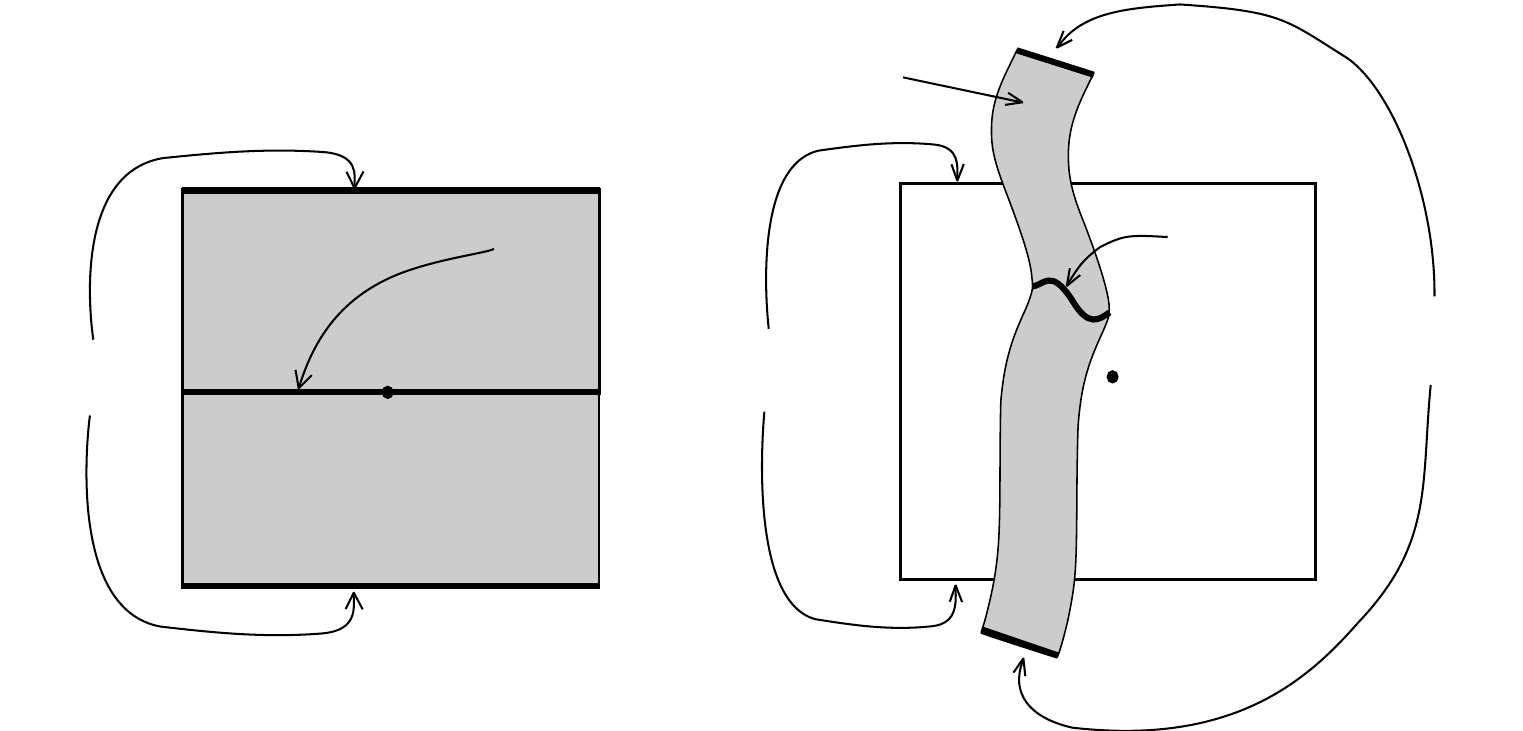
  \caption{Conditions (C6) and (C8)-(C9). Here $P=P(\de,p)$, $\widetilde{P}=P(\de,q)$, $T=T(\de,p)$, $Q=Q(\de,p)$, $\widetilde{Q}=Q(\de,q)$.}
\end{figure}

\begin{figure}[h]\centering
\def\svgwidth{10cm}
 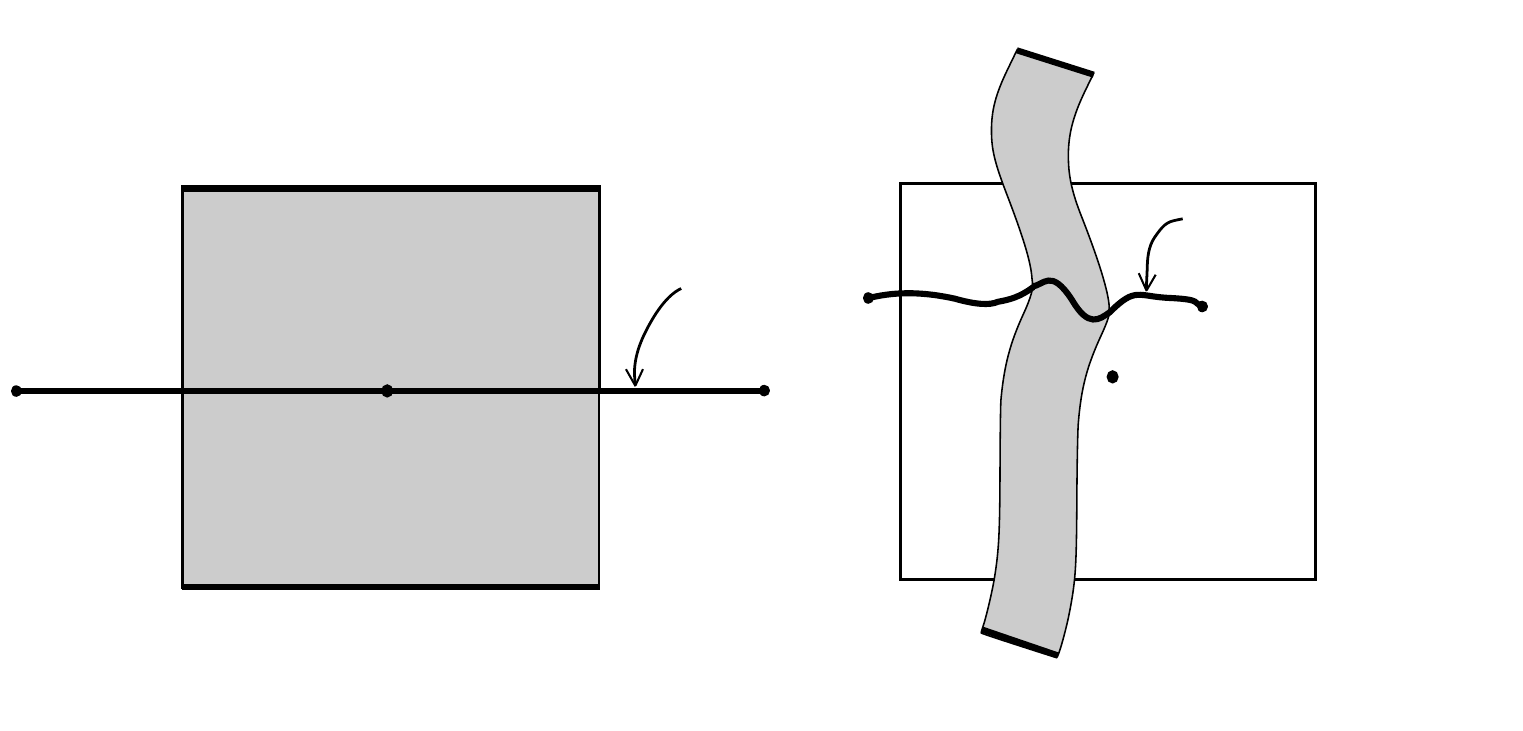
  \caption{Condition (C7). Here $T^\prime=T(p,\De)$.}
\end{figure}

The same reasoning as in [7] proves the following statement.
\begin{proposition}\label{prop}
{ Assume that conditions} {\em (C2) -- (C4) }
{ hold. Let $p_0,\dots,p_m \in K$.
If $0<\de<\De$ and condition ${\mathcal{W}}(\de,\De,p_k,p_{k+1})$ is satisfied
for any $k=0,\dots,m-1$, then there is a point $r\in P(\de,p_0)$
such that}
\begin{equation}
\label{2.1}
f^k(r)\in P(\de,p_k),\quad k=1,\dots,m.
\end{equation}
\end{proposition}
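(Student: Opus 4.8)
The plan is to prove a Ważewski-type retract principle in contrapositive form. Set
$$
\Lambda=\bigcap_{k=0}^{m}f^{-k}\bigl(P(\de,p_k)\bigr),
$$
so that a point $r$ satisfying \eqref{2.1} exists precisely when $\Lambda\neq\emptyset$. I would assume $\Lambda=\emptyset$ and derive a contradiction by constructing a retraction of $P(\de,p_0)$ onto $Q(\de,p_0)$, which is forbidden by (C2).

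First I would replace the locally defined dynamics by an honest continuous ``capped'' dynamics. Using the retractions $\sigma_{k}\colon P(\De,p_{k})\to P(\de,p_{k})$ supplied by (C4) together with the inclusions $f(P(\de,p_{k}))\subset\mbox{Int}^0\,P(\De,p_{k+1})$ of (C5), define
$$
F_k=\sigma_{k+1}\circ f\colon P(\de,p_k)\to P(\de,p_{k+1}),\qquad G^{(k)}=F_{k-1}\circ\cdots\circ F_0 .
$$
Each $F_k$ is continuous, and since $\sigma_{k+1}$ is the identity on $P(\de,p_{k+1})$, one has $G^{(k)}(x)=f^{k}(x)$ as long as the true orbit has not yet left the small boxes. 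The conditions (C6), (C8), (C9) turn each $F_k$ into a covering relation of the pair $(P(\de,p_k),Q(\de,p_k))$ over $(P(\de,p_{k+1}),Q(\de,p_{k+1}))$: by (C6) the core $T(\de,p_k)$ is carried into $\mbox{Int}^0\,P(\de,p_{k+1})$, by (C9) the exit face $Q(\de,p_k)$ is carried off $P(\de,p_{k+1})$, and by (C8) a point of $f(P(\de,p_k))$ can meet $\partial^0\,P(\de,p_{k+1})$ only along $\mbox{Int}^0\,Q(\de,p_{k+1})$. Hence $F_k$ sends every clipped point onto $Q(\de,p_{k+1})$ and never onto $R(\de,p_{k+1})$, and in these terms $x\in\Lambda$ if and only if the capped orbit $G^{(k)}(x)$ avoids the exit sets $Q(\de,p_k)$ for $1\le k\le m$.

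Under the assumption $\Lambda=\emptyset$, every $x\in P(\de,p_0)$ then has a first exit moment $\tau(x)=\min\{k\ge1:G^{(k)}(x)\in Q(\de,p_{k})\}$ with $G^{(\tau(x))}(x)\in Q(\de,p_{\tau(x)})$. I would define $\rho\colon P(\de,p_0)\to Q(\de,p_0)$ by transporting this exit point backwards: apply at each relevant level the retraction of $P(\de,p_k)\setminus T(\de,p_k)$ onto $Q(\de,p_k)$ furnished by (C3), and pull the resulting ``side'' back to level $0$. Continuity across the jump loci of $\tau$ should follow because the clipping occurs on the closed sets $Q(\de,p_k)$ and because (C4) keeps $V\neq0$, so a transported point never crosses the core; condition (C7), $f(T(\De,p_k))\cap Q(\de,p_{k+1})=\emptyset$, prevents the two components of the exit set from merging along the orbit, so the chosen side is well defined. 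For $x\in Q(\de,p_0)$ one has $\tau(x)=1$ by (C9), and the construction returns $x$ itself, so $\rho$ is a genuine retraction.

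The main obstacle is precisely this construction of $\rho$: one must verify that the backward transport of the exit point is continuous across the (in general discontinuous) exit time $\tau$ and that it restricts to the identity on $Q(\de,p_0)$. This is where (C3), (C4) and (C6)--(C9) are all used, and it is the step requiring the same care as in [7]. Once $\rho$ is shown to be a retraction of $P(\de,p_0)$ onto $Q(\de,p_0)$, it contradicts (C2), and therefore $\Lambda\neq\emptyset$, which is the assertion of the proposition.
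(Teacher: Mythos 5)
Your global strategy --- assume no point satisfies (\ref{2.1}) and contradict (C2) by producing a retraction of $P(\de,p_0)$ onto $Q(\de,p_0)$ --- is exactly the reasoning the paper invokes (it defers the proof to [7], which argues this way). But your implementation breaks at its central step, the claim that the capped map $F_k=\sigma_{k+1}\circ f$ ``sends every clipped point onto $Q(\de,p_{k+1})$''. Condition (C4) only supplies \emph{some} retraction $\sigma_{k+1}\colon P(\De,p_{k+1})\to P(\de,p_{k+1})$ that fixes $P(\de,p_{k+1})$ and preserves the set $\{V\neq 0\}$; it is perfectly consistent with (C4) that $\sigma_{k+1}$ send a point of $\mbox{Int}^0\,P(\De,p_{k+1})\setminus P(\de,p_{k+1})$ (say one with small nonzero $V$ and $W>\de$) deep into $\mbox{Int}^0\,P(\de,p_{k+1})$. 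Conditions (C6), (C8), (C9) constrain $f$, not $\sigma_{k+1}$, so they cannot rescue the deduction; indeed no single retraction could send \emph{all} exterior points onto $Q(\de,p_{k+1})$, since points just outside $R(\de,p_{k+1})$ would have to jump a $V$-distance $\de$, violating continuity at $R(\de,p_{k+1})$. Consequently a true orbit can escape the boxes while its capped shadow never touches an exit face: your equivalence ``$x\in\Lambda$ iff $G^{(k)}(x)$ avoids $Q(\de,p_k)$'' fails, the exit time $\tau$ does not detect escape, and the backward transport is built on a set that need not exist. A second defect: even granting the clipping claim, for $x\in Q(\de,p_0)$ the exit point is $\sigma_1(f(x))$, and since $f(x)\notin P(\de,p_1)$ this point is \emph{not} $f(x)$; its $f$-preimage is therefore not $x$, so your $\rho$ need not restrict to the identity on $Q(\de,p_0)$. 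Finally, the continuity of $\rho$ across jumps of $\tau$, which you yourself flag as the main obstacle, is precisely the heart of the proof and is left unresolved.

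For comparison, the argument of [7] avoids both problems by detecting exits on the \emph{true} dynamics and never pushing an escaped image point back. Let $\Sigma_k\subset P(\de,p_k)$ be the set of points whose $f$-orbit stays in the boxes $P(\de,p_{k+j})$, $1\le j\le m-k$, and construct by downward induction on $k$ retractions $u_k\colon P(\de,p_k)\setminus\Sigma_k\to Q(\de,p_k)$. If $f(q)\notin\mbox{Int}^0\,P(\de,p_{k+1})$, set $u_k(q)=\theta_k(q)$, where $\theta_k$ is the retraction from (C3); this is legitimate because (C6) forbids such $q$ from lying on $T(\de,p_k)$. If instead $f(q)\in P(\de,p_{k+1})\setminus\Sigma_{k+1}$, set $u_k(q)=\theta_k\bigl(\sigma_k\bigl(f^{-1}(u_{k+1}(f(q)))\bigr)\bigr)$, which is defined by (C5) together with (C7) (the latter keeps $f^{-1}(Q(\de,p_{k+1}))$ off $T(\De,p_k)$, and (C4) preserves $V\neq 0$ under $\sigma_k$). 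The two formulas agree on the overlap: there $f(q)\in f(P(\de,p_k))\cap\partial^0 P(\de,p_{k+1})\subset\mbox{Int}^0\,Q(\de,p_{k+1})$ by (C8), where $u_{k+1}$ is the identity, so the second formula collapses to $\theta_k(\sigma_k(q))=\theta_k(q)$; this agreement is exactly what makes the glued map continuous across exit-time jumps, the point your sketch leaves open. Condition (C9) gives $u_k=\mathrm{id}$ on $Q(\de,p_k)$ and keeps $Q(\de,p_k)$ inside the domain, and the induction starts at $k=m-1$, where the second case is vacuous because $\Sigma_m=P(\de,p_m)$. If $\Sigma_0=\emptyset$, then $u_0$ is a retraction of $P(\de,p_0)$ onto $Q(\de,p_0)$, contradicting (C2); hence $\Sigma_0\neq\emptyset$, which is precisely the assertion of the proposition.
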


Thus, to show that $f$ has the finite shadowing property in a neighborhood
$K$ of the origin, it is enough to find functions $V$ and $W$ that
satisfy conditions (C1) -- (C4) and to show that for any $\De>0$ there
exists a $\de\in(0,\De)$ with the following property: There exists 
a $d>0$ such that if $p,q\in K$ and $|q-f(p)|\leq d$, then condition
${\mathcal{W}}(\de,\De,p,q)$ is satisfied.

Indeed, take any $\ep>0$, find a corresponding $\De_0$ (see condition
(C1)), then take suitable $\De<\De_0$ and $\de$, and finally find
a $d>0$ having the above property. Then, if
$p_0,\dots,p_m \in K$ is a $d$-pseudotrajectory of $f$, this
pseudotrajectory is $\ep$-shadowed by any point $r$ that
satisfies inclusions (\ref{2.1}).

We realize this scheme in the next section considering Case I.
In Case NI, $f$ does not have the usual shadowing property, and we
have to modify the concept of shadowing (see Sec. \ref{sec5}).

\section{Two-dimensional case. Isolated fixed point}\label{sec4}

Now we consider a two-dimensional homeomorphism
$$
f(x,y)=(g(x,y),h(x,y))
$$
having a fixed point at the origin and assume that $f$
is contracting in the direction of variable $x$
and expanding in the direction of variable $y$
(and both the contraction and expansion are not
assumed to be hyperbolic).

Let $p=(p_x,p_y)$ be the coordinate representation of a point
$p\in\R^2$.

In the case considered, we introduce
two functions,
$$
V(p,q)=|p_y-q_y|\quad\mbox{and}\quad W(p,q)=|p_x-q_x|.
$$

For such functions, conditions (C1) -- (C4) are obviously
satisfied for any $p\in\R^2$ and any $0<\de<\De$.

We first formulate general conditions of finite shadowing
for an arbitrary compact subset $K$ of the plane and then
apply them to our first basic example.
\medskip

{\bf Condition 2. } For any $\De_0>0$ there exist 
$\de,\De>0$ such that $\de<\De<\De_0$ and if
$p\in K$, then condition (C5) with $q=f(p)$ is satisfied,
\begin{equation}
\label{3.1}
|g(p_x+v,p_y+w)-g(p_x,p_y)|<\de\quad\mbox{for}\quad (v,w)\in H(\de),
\end{equation}
where
$$
H(\de)=\{|v|\leq\de,\;w=0\}\cup \{|v|=\de,\;|w|\leq \de\},
$$
\begin{equation}
\label{3.2}
|h(p_x+v,p_y)-h(p_x,p_y)|<\de\quad\mbox{for}\quad 0\leq|v|\leq \De,
\end{equation}
and
\begin{equation}
\label{3.3}
|h(p_x+v,p_y+w)-h(p_x,p_y)|>\de\quad\mbox{for}\quad 0\leq|v|\leq \de,
|w|=\de.
\end{equation}
\begin{theorem}
{\em If $K$ is a compact subset of the plane
and condition} 2 {\em is satisfied, then $f$ has
the finite shadowing property in the set} $K$
\end{theorem}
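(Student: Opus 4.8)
The plan is to follow the verification scheme stated immediately after Proposition~\ref{prop}. Since conditions (C1)--(C4) already hold for $V(p,q)=|p_y-q_y|$ and $W(p,q)=|p_x-q_x|$, it suffices, given $\De_0>0$, to produce $\de<\De<\De_0$ and a $d>0$ such that $p,q\in K$ with $|q-f(p)|\le d$ imply that $\mathcal{W}(\de,\De,p,q)$, i.e. conditions (C5)--(C9), holds; Proposition~\ref{prop} then furnishes the shadowing point. I would take the $\de,\De$ supplied by Condition~2 and choose $d$ small. Throughout it helps to note that for these $V,W$ the set $P(\de,p)$ is the closed axis-parallel square of half-side $\de$ centred at $p$, that $Q(\de,p)$ and $R(\de,p)$ are its horizontal (top/bottom) and vertical (left/right) edges, and that $T(\de,p)$ is its horizontal mid-segment.

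First I would upgrade the strict inequalities of Condition~2 to uniform ones. By compactness of $K$ and continuity of $g,h$, the bounds (\ref{3.1})--(\ref{3.3}) and the strict interior inclusions in (C5) with $q=f(p)$ hold with a fixed gap $\eta>0$ independent of $p\in K$. Fixing $d<\eta$ then lets me replace $g(p_x,p_y)=f(p)_x$ and $h(p_x,p_y)=f(p)_y$ by the coordinates of any $q$ with $|q-f(p)|\le d$ in all of these estimates, so that (C5) carries over from Condition~2. Conditions (C6), (C7), (C9) then follow directly: (\ref{3.1}) with $w=0$ together with (\ref{3.2}) places $f(T(\de,p))$ strictly inside the square $P(\de,q)$; (\ref{3.2}) over $|v|\le\De$ keeps the $y$-coordinate of $f(T(\De,p))$ strictly within $\de$ of $q_y$, so it misses $Q(\de,q)$; and (\ref{3.3}) forces $|h-q_y|>\de$ on $f(Q(\de,p))$, placing it outside $P(\de,q)$.

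The only delicate point is (C8), which I would first reduce to the single statement $f(P(\de,p))\cap R(\de,q)=\emptyset$. Indeed, $\mbox{Int}^0 Q(\de,q)$ is exactly the part of $\partial^0 P(\de,q)$ on which $|x-q_x|<\de$, so (C8) is equivalent to $f(P(\de,p))$ avoiding the vertical edges $R(\de,q)=\{|x-q_x|=\de,\;|y-q_y|\le\de\}$ of the target square. The difficulty is that (\ref{3.1}) controls $g$ only on $H(\de)=T(\de,p)\cup R(\de,p)$, giving no pointwise bound on $g$ in the interior or on the top and bottom edges of the source square; a purely metric estimate therefore cannot close the argument, and a topological one is required.

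To finish (C8) I would exploit that $f$ is a homeomorphism, so $f(P(\de,p))$ is a closed topological disc bounded by the Jordan curve $\Gamma=f(\partial^0 P(\de,p))=f(R(\de,p))\cup f(Q(\de,p))$. The localized estimates above say that $f(R(\de,p))\subset\{|x-q_x|<\de\}$ while $f(Q(\de,p))\subset\{|y-q_y|>\de\}$, so $\Gamma$ meets the vertical line $x=q_x+\de$ only where $|y-q_y|>\de$; in particular the segment $\{x=q_x+\de,\;|y-q_y|\le\de\}$ is disjoint from $\Gamma$ and hence lies entirely inside or entirely outside $f(P(\de,p))$. Shooting a horizontal ray to the right from its midpoint $(q_x+\de,q_y)$, I note that this ray stays at $y=q_y$, $x\ge q_x+\de$, so it avoids $f(R(\de,p))$ (where $x<q_x+\de$) and $f(Q(\de,p))$ (where $y\ne q_y$); crossing $\Gamma$ zero times, the midpoint is outside $f(P(\de,p))$, whence the whole segment is, and symmetrically for the left edge. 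This yields (C8). With (C5)--(C9) verified, Proposition~\ref{prop} and the scheme following it give the finite shadowing property on $K$. I expect the Jordan-curve step for (C8) to be the main obstacle, precisely because the hypotheses pin down $g$ and $h$ only on the skeleton $H(\de)\cup Q(\de,p)$ of the source square, and the interior behaviour must be recovered topologically.
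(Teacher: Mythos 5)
Your proposal is correct and takes essentially the same route as the paper: verify (C5)--(C9) using (\ref{3.1})--(\ref{3.3}), invoke compactness of $K$ to pass from $q=f(p)$ to any $q$ with $|q-f(p)|\leq d$, and settle (C8) by a planar-topology argument showing that $f(P(\de,p))$ cannot meet $R(\de,q)$ because its boundary image $f(Q(\de,p))\cup f(R(\de,p))$ avoids the relevant region. The paper's version of that last step replaces your Jordan-curve ray count by the observation that the unbounded half-strips $S=\{|x-g(p)|\geq\de,\;|y-h(p)|\leq\de\}$ are connected, disjoint from $f(\partial^0 P(\de,p))$, and cannot lie inside the compact set $f(P(\de,p))$; the two arguments are interchangeable.
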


\begin{proof} First we show that condition 2 implies that
condition ${\mathcal{W}}(\de,\De,p,f(p))$ 
is satisfied for any $p\in K$.

Since
$$
T(\de,p)=\{r:\;|r_x-p_x|\leq \de, r_y=p_y\},
$$
conditions (\ref{3.1}) with $w=0$ and (\ref{3.2}) imply condition (C6).

Similarly, condition (\ref{3.2}) implies condition (C7).

Since condition (\ref{3.3}) holds,
\begin{equation}
\label{3.18}
f(Q(\de,p))\cap P(\de,f(p))=\emptyset.
\end{equation}

Inequalities (\ref{3.1}) for $|v|=\de,\;
0\leq |w|\leq \de$ combined with (\ref{3.18})
show that the image of
the boundary of the set $P(\de,p)$ under the
homeomorphism $f$ does not intersect the set $R(\de,f(p))$.
%

Set   $$S=\{(g(p) +a,h(p) +b)\mid |a|\geq \de, |b|\leq \de  \}.$$
It is obviously that $R(\de,f(p))\subset S$. From $(\ref{3.1})$, $(\ref{3.2})$ and $(\ref{3.3})$ it follows that $f(\partial^0P(\de,p))\cap S=\emptyset$. Because of connectedness of $f(P(\de,p))$ it follows also that $f(P(\de,p))\cap S = \emptyset$ and condition (C8) is proved.

Finally, condition (C9) follows from condition (\ref{3.3}).

Thus, we have shown that conditions (\ref{3.1}) -- (\ref{3.3})
imply that condition ${\mathcal{W}}(\de,\De,p,f(p))$ 
is satisfied for any $p\in K$.

Finally, we note that since $K$ is compact and $f$, $V$, and $W$ are continuous,
the form of conditions (C5) -- (C9) implies that there exists a $d>0$ depending only
on $\de$ and $\De$ such that if $p\in K$ and $|q-f(p)|<d$,
then condition ${\mathcal{W}}(\de,\De,p,q)$ is satisfied.

Now Theorem 4.1 is a corollary of the proposition stated in the 
previous section. 
\end{proof}

{\bf Example 2. } Consider a diffeomorphism (\ref{3.0})
in which $n,m$ are natural numbers
and $X,Y$ are smooth functions that vanish at the origin
together with their Jacobi matrices.

First we fix a small closed neighborhood $K$ of the
origin (in what follows, we make it as small as our future conditions
require).

Let us assume that if $p\in K$, then
\begin{equation}
\label{3.5}
(2n+1)p_x^{2n}-
\frac{\partial X}{\partial x}(p)-\nu\frac{\partial X}{\partial y}(p)>0,
\quad p_x\neq 0,|\nu|\leq 1,
\end{equation}
and
\begin{equation}
\label{3.7}
(2m+1)p_y^{2m}+
\frac{\partial Y}{\partial y}(p)+\nu\frac{\partial Y}{\partial x}(p)>0,
\quad p_y\neq 0,|\nu|\leq 1.
\end{equation}

We take the same functions $V$ and $W$ as above in this section.

It follows from the form of $f$ that for any $\alpha>0$ we can find
a neighborhood $K$ such that
\begin{equation}
\label{3.4}
\left\|\frac{\partial f}{\partial(x,y)}(p)\right\|,
\left\|\frac{\partial f^{-1}}{\partial(x,y)}(p)\right\|
\leq 1+\alpha,\quad p\in K.
\end{equation}

Thus, if $\alpha>0$ is given, then, for
$\de$ small enough (and $K$ properly chosen),
condition (C5) is satisfied with $\De=(1+\alpha)\de$.

We take $\alpha<1$ (in which case we may take $\De=2\de$
in condition (C5))
and assume that $K$ is so small that
$$
\left|\frac{\partial Y}{\partial x}(p)\right|\leq \frac{1}{4},
\quad p\in K.
$$
If $p\in K$ and $|v|\leq\De=2\de$, then
$$
|h(p_x+v,p_y)-h(p_x,p_y)|=|Y(p_x+v,p_y)-Y(p_x,p_y)|\leq
\frac{1}{4}|v|\leq \frac{\de}{2};
$$
thus, condition (\ref{3.2}) is satisfied.

Now let us check condition (\ref{3.3}).
If $0\leq|v|\leq \de$ and $|w|=\de$, then
$v=\nu w$ for some $|\nu|\leq 1$.

Assume, for definiteness, that $w>0$ and estimate,
using condition (\ref{3.7}):
$$
h(p_x+\nu w,p_y+w)-h(p_x,p_y)=\int_0^w\frac{d}{dt}
h(p_x+\nu t,p_y+t)dt=
$$
$$
=\int_0^w[\nu\frac{\partial h}{\partial x}(p_x+\nu t,p_y+t)+
\frac{\partial h}{\partial y}(p_x+\nu t,p_y+t)]dt=
$$
$$
=\int_0^w[\nu\frac{\partial Y}{\partial x}(p_x+\nu t,p_y+t)+
\frac{\partial Y}{\partial y}(p_x+\nu t,p_y+t)+
$$
$$
+1+(2m+1)(p_y+t)^{2m}]dt>w\geq\de,
$$
where we take into account that $p_y+t$ is not identically zero.
This proves condition (\ref{3.3}) (the case $w<0$ is treated
similarly).

To prove condition (\ref{3.1}), we consider the case
where $|v|=\de,\;|w|\leq d$; the case where $w=0$
is treated similarly (note that in both cases, $w=\nu v$
with $|\nu|\leq 1$).

We assume, for definiteness, that $v=\de$, represent
$w=\nu v$ with $|\nu|\leq 1$, and estimate, using 
condition (\ref{3.5}):
$$
g(p_x+\de,p_y+\nu \de)-g(p_x,p_y)=\int_0^\de\frac{d}{dt}
g(p_x+ t,p_y+\nu t)dt=
$$
$$
=\int_0^\de[1-(2n+1)(p_x+t)^{2n}+
\frac{\partial X}{\partial x}(p_x+ t,p_y+\nu t)+
$$
$$
+\nu\frac{\partial X}{\partial y}(p_x+ t,p_y+\nu t)
]dt<\de.
$$
The case $v=-\de$ is treated similarly.
\medskip

Let us consider as a ``test perturbation" a monomial $Y(x,y)=ax^ky^l$
in (\ref{3.0}), where $a\in\R$, $k\geq 0$, and $l\geq 1$. 
Let $(x,y)$ be a point in a
neighborhood $K$ of the origin with $y\neq 0$.

Taking $\nu=0$ in (\ref{3.7}) and dividing the result by
$y^{2m}$, we get the following necessary condition:
$$
alx^ky^{l-2m-1}>-(2m+1).
$$
This condition is obviously satisfied in a small $K$
if $l\geq 2m+1$ (and if $|a|$ is small in the case 
where $l=2m+1$ and $k=0$).

If $l<2m+1$, then the necessary condition looks as
follows: $a>0$, $k$ is even, and $l$ is odd.

Let us write condition (\ref{3.7}) in $K$
in the form
\begin{equation}
\label{3.10}
(2m+1)y^{2m}+alx^ky^{l-1}>|akx^{k-1}y^l|.
\end{equation}

If $l\geq 2m+1$ (and if $|a|$ is small in the case 
where $l=2m+1$ and $k=0$), condition (\ref{3.7})
is obviously satisfied in a small neighborhood of the origin.

In the case where $l\leq 2m$, we get one more necessary
condition: $k+l\geq 2m+1$. Indeed, since $a>0$, $k$ is even, and $l$ is odd,
it is enough to consider (\ref{3.7}) for $x,y\geq 0$.

Now let us write (\ref{3.7}) in the form
$$
ax^{k-1}y^{l-1}(ky-lx)<(2m+1)y^{2m}.
$$
If $k+l<2m+1$, set $x=\frac{kb}{2l}$ and $y=b$ with small $b>0$.
We get an inequality of the form
$$
0<\mbox{const}<b^{2m-k-l+1}
$$
which cannot be satisfied for all small $b$.

Elementary calculations show that if $l\leq 2m$, 
$a>0$, $k$ is even, $l$ is odd, and $k+l\geq 2m+1$,
then condition (\ref{3.10}) is satisfied in a
small neighborhood of the origin.

We can obtain similar conditions if $X(x,y)$ in (\ref{3.0})
is also a monomial.

Our methods allow us to estimate the dependence of $d$ on $\ep$
in the finite shadowing property for the considered case.

For example, if $X(x,y)=a_1x^{k_1}y^{l_1}$ and
$Y(x,y)=a_2x^{k_2}y^{l_2}$ in (\ref{3.0}) with $k_1>2n+1$
and $l_2>2m+1$, then
the same reasoning as above shows that there exists a
neighborhood $K$ of the origin and  a small number $c>0$
such that if a $\de>0$ is given and $\{p_k\}$ is a finite set of points in $K$
with 
$$
|f(p_k)-p_{k+1}|\leq c\de^p,
$$
where $p=\max(2n+1,2m+1)$, then conditions ${\mathcal{W}}(\de,2\de,p_k,p_{k+1})$
are satisfied. This means that $f$ has in $K$ the finite shadowing
property with the following dependence of $d$ on $\ep$: $d=c\ep^p$.

\section{Two-dimensional case. Nonisolated fixed point}\label{sec5}

In this section, we consider a model example of the diffeomorphism (\ref{4.0})
for which the origin is a nonisolated fixed point (any point
$(0,y)$ is a fixed one).

Of course, $f$ does not have the shadowing property.

Nevertheless, we show that $f$ has an analog of the finite
shadowing property if we consider pseudotrajectories $\{p_k\}$
with $(p_k)_x\neq 0$ and allow the ``errors" 
$$
|f(p_k)-p_{k+1}|
$$
to depend on $(p_k)_x$. The errors must be smaller for smaller
values of $|(p_k)_x|$. Such an approach (in the case of a nontransverse
homoclinic point) has been suggested by S. Tikhomirov.

We restrict our consideration to the case of a diffeomorphism
$f$ of a very simple form (\ref{4.0}) to simplify presentation
(as the reader will see, even this case is not completely trivial);
of course, our reasoning can be applied in more general
situations.

Note that 
\begin{equation}
\label{4.01}
f^{-1}(x,y)=\left(2x,\frac{y}{1+4x^2}\right).
\end{equation}

Thus, we consider a finite pseudotrajectory $p_0,\dots,p_m$
and assume that $(p_k)_x\neq 0$ and
\begin{equation}
\label{4.1}
|f(p_k)-p_{k+1}|\leq d(p_k)_x^2,\quad k=0,\dots,m-1,
\end{equation}
for some $d>0$.

Our main result is as follows. Recall that in our case,
$$
K_0=\{(x,y):\;0<|x|<1\}.
$$

\begin{theorem}\label{theor5}
{ There exists a neighborhood $K$ of the
origin and a number $c>0$ such that, for any $\ep>0$ and
any pseudotrajectory $p_0,\dots,p_m$ in $K\cap K_0$
that satisfies conditions} {\em (\ref{4.1})} { with
$d=c\ep$ there exists a point $r$ for which inequalities}
{\em (\ref{0})} { are satisfied.}
\end{theorem}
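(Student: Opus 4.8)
The plan is to shadow the pseudotrajectory by the point $r=p_0$ itself, exploiting the fact that the two coordinates essentially decouple: the $x$-map is the pure contraction $x\mapsto x/2$, and along the exact trajectory starting at $p_0$ the $y$-multiplier $1+x^2$ tends to $1$ so fast that the \emph{total} expansion stays bounded. Write $x_k=(p_k)_x$, $y_k=(p_k)_y$, and let $f^k(r)=(\xi_k,\eta_k)$, so that $\xi_k=x_0/2^k$ and $\eta_{k}=y_0\prod_{j=0}^{k-1}(1+\xi_j^2)$. Condition (\ref{4.1}) gives $x_{k+1}=x_k/2+a_k$ and $y_{k+1}=y_k(1+x_k^2)+b_k$ with $|a_k|,|b_k|\le d\,x_k^2$. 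I would fix a small closed neighborhood $K$ of radius $\rho$ and a small constant $c$, and set $d=c\ep$.

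Next I would control the $x$-coordinate. Since $|x_{k+1}|\le|x_k|(1/2+d|x_k|)\le(3/4)|x_k|$ once $c\rho^2$ is small, the $x_k$ decay geometrically; solving the linear recursion for $x_k-\xi_k$ then shows that $|x_k-\xi_k|\le C\,d\,x_0^2\,(9/16)^k$, which is both far below $\ep$ and geometrically summable. This geometric decay (rather than a crude uniform bound) is exactly what I will need for the $y$-estimate, since it forces $|x_k^2-\xi_k^2|\le|x_k-\xi_k|\,(|x_k|+|\xi_k|)$ to carry a factor of $d=c\ep$ and to decay geometrically in $k$.

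The crux is the expanding $y$-direction. The key observation is that $\sum_j\xi_j^2=x_0^2\sum_j 4^{-j}<\infty$, so the total product $\Pi:=\prod_{j\ge0}(1+\xi_j^2)=\exp(O(x_0^2))$ is finite and close to $1$. Setting $\delta_k=\eta_k-y_k$, one obtains the linear recursion $\delta_{k+1}=(1+\xi_k^2)\delta_k+g_k$ with $\delta_0=0$ and $g_k=y_k(\xi_k^2-x_k^2)-b_k$. Solving it gives $|\delta_k|\le\Pi\sum_{j}|g_j|$, and by the previous step $\sum_j|g_j|\le\sum_j\bigl(\rho\,|\xi_j^2-x_j^2|+d\,x_j^2\bigr)=c\ep\cdot O(\rho^2)$. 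Hence $|\delta_k|\le\ep/2$ once $c$ is small, and combining this with the $x$-estimate yields $|f^k(r)-p_k|\le|x_k-\xi_k|+|\delta_k|\le\ep$.

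The main obstacle is precisely that the $y$-direction is expanding, so that forward shadowing from a fixed initial point would ordinarily amplify errors without bound. What rescues the argument is that the error scaling $\propto x_k^2$ in (\ref{4.1}), together with the geometric contraction in $x$, makes both the one-step errors $b_k$ and the mismatch $\xi_k^2-x_k^2$ summable, while the expansion factors $1+\xi_k^2$ have a convergent infinite product; thus the accumulated $y$-error is uniformly $O(c\ep)$ regardless of the length $m$. For completeness I would dispose of the easy regime of large $\ep$, say $\ep\ge 3\rho$, separately, where $r=p_0$ shadows trivially because both $f^k(r)$ and $p_k$ remain within $O(\rho)$ of the origin.
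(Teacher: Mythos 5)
Your proposal is correct (modulo routine constant bookkeeping) and takes a genuinely different route from the paper. The paper never exhibits the shadowing point explicitly: it verifies the abstract conditions (C1)--(C9) for the Lyapunov pair $V(q,p)=|p_y-q_y|$ and the weighted function $W(q,p)=|p_x-q_x|/(|p_x|(1-|p_x|))$ --- the weight $|p_x|(1-|p_x|)$ is precisely how the $x^2$-scaling of the admissible errors in (\ref{4.1}) is built into the geometry --- and then invokes the topological Proposition \ref{prop} (a retraction/non-retraction argument) to produce $r$. You instead take $r=p_0$ and integrate the dynamics explicitly: since the $x$-equation decouples and is exactly linear, $\xi_k=x_0/2^k$, the total expansion $\prod_j(1+\xi_j^2)=\exp(O(x_0^2))$ in the $y$-direction is bounded, and both the one-step errors $|b_k|\le c\ep x_k^2$ and the mismatch terms $y_k(\xi_k^2-x_k^2)$ are geometrically summable with total $O(c\ep\rho^2)$ uniformly in $m$; this is the genuine quantitative reason the theorem holds, and it is what the paper's weighted $W$ encodes implicitly. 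What each approach buys: yours is elementary, explicit, and identifies the shadowing point concretely, but it leans on the exact solvability of (\ref{4.0}); the paper's machinery is heavier but is insensitive to the explicit form of the orbits and is intended by the authors as a template for more general situations. Two minor repairs to your write-up: the contraction estimate $|x_{k+1}|\le \frac{3}{4}|x_k|$ requires $c\ep|x_k|\le \frac{1}{4}$, which already uses the restriction to the regime $\ep=O(\rho)$, so the case split on the size of $\ep$ should be made at the outset rather than at the end; and in the trivial regime the threshold should be a somewhat larger multiple of $\rho$ (e.g.\ $\ep\ge 5\rho$), since $|\eta_k|$ can exceed $|y_0|$ by the factor $\prod_j(1+\xi_j^2)$.
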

\begin{proof}
 As above, in our proof we use the approach
based on Lyapunov functions, but now one of the functions
is modified. We take
$$
V(q,p)=|p_y-q_y|\quad\mbox{and}\quad
W(q,p)=\frac{|p_x-q_x|}{|p_x|(1-|p_x|)}.
$$
Clearly, these functions are nonnegative and continuous
on $K_0\times K_0$ and they vanish if their arguments
coincide.

It is obvious that conditions (C1) -- (C4) are satisfied
(and we can take $\De_0(\ep)=\ep/2$ in condition (C1)).

In the following proof, we take
\begin{equation}
\label{4.3}
\de=\frac{\De}{N}=\frac{\De_0}{N}=\frac{\ep}{2N}
\end{equation}
(the constant $N$ is chosen below)
and $d=c\de$ in condition (\ref{4.1}). 

First we take $c=1$ and then make $c$ smaller
preserving the same notation $c$; the same is done
with the neighborhood $K$.

Our main goal is to check condition ${\mathcal{W}}(\de,\De,p_k,p_{k+1})$
for consecutive points $p_k,p_{k+1}$ of the pseudotrajectory
considered for properly chosen $\de$ and $\De$; after that, we
apply Proposition \ref{prop} stated in Sec. \ref{sec3}.

To make the presentation shorter, let $p_k=p=(x,y)$
and $p_{k+1}=q=(x',y')$.

Thus, we may assume that $|x|,|x'|,|y|,|y'|$ are as
small as we need.

First we claim that there exists a number $N>0$
such that if $K$ is a small neigborhood of the origin,
$p,q\in K$,
and $\de<1$, then
inclusions (C5) hold with $\De=N\de$.

A point $(x+v,y+w)$ belongs to $P(\de,p)$ if and only if
$|v|\leq\de|x|(1-|x|)$ and $|w|\leq\de$. Similar inequalities
define the set $P(\de,q)$.

Thus, to prove our claim, we have to show that there
exists a number $N$ such that if $|v|\leq\de|x|(1-|x|)$ and $|w|\leq\de$,
then
$$
\left|\frac{x+v}{2}-x'\right|< N\de|x'|(1-|x'|)
$$
and
$$
|(y+w)(1+(x+v)^2)-y'|< N\de.
$$
In addition, we have to show that if
$|v|\leq\de|x'|(1-|x'|)$ and $|w|\leq\de$, then
$$
\left|2(x'+v)-x\right|< N\de|x|(1-|x|)
$$
and
$$
\left|\frac{y'+w}{1+4(x'+v)^2}-y\right|< N\de
$$
(see the formula (\ref{4.01}) for $f^{-1}$).

Let us prove our statement on the existence of a number $N$ 
for which the third of the above inequalities holds
(the remaining inequalities are established using
a similar reasoning).

We may assume that $|x|<1/4$ and $\de<1$. Then
it follows from (\ref{4.1}) that
$$
|x'-x/2|<dx^2<\de|x|/4<|x|/4,
$$
which implies that $|x'|<|x|$ and $|2x'-x|<\de|x|/2$.
In addition,
$$
|v|\leq\de|x'|(1-|x'|)<\de|x'|<\de|x|
$$
and 
$$
\de|x|(1-|x|)>\de|x|/2.
$$

Combining these inequalities, we see that if
$N=3$, then
$$
N\de|x|(1-|x|)>3\de|x|/2>|2x'-x|+|v|,
$$ 
as required.

In what follows, we take $\De=N\de$ with a fixed $N$.

Now we check conditions (C6) and (C7). To simplify
presentation, we assume that $x>0$ (and $x$ is as small as
we need).

We note that
$$
f(T(\de,p))=\left\{\left(\frac{x+v}{2},y(1+(x+v)^2)\right):\;|v|\leq\de x(1-x)\right\}.
$$
Thus, the projection of $f(T(\de,p))$ to the $x$ axis is the segment $[A^-,A^+]$,
where
$$
A^-=\frac{x}{2}-\frac{\de x}{2}(1-x)\quad\mbox{and}\quad
A^+=\frac{x}{2}+\frac{\de x}{2}(1-x).
$$
At the same time, if $x'=x/2+u$, then
the projection of $P(\de,q)$ to the $x$ axis
is the segment $[B^+(u),B^-(u)]$, where
$$
B^-(u)=\frac{x}{2}+u-\frac{\de(x+2u)}{2}\left(1-\frac{x}{2}-u\right),
$$
$$
B^+(u)=\frac{x}{2}+u+\frac{\de(x+2u)}{2}\left(1-\frac{x}{2}-u\right),
$$
and $|u|\leq dx^2$ (in the above formulas, we note that if $d$ and
$x$ are small, then $x+2u>0$).

Since 
$$
B^+(0)=A^++\frac{\de x^2}{4}\quad\mbox{and}\quad
B^-(0)=A^--\frac{\de x^2}{4},
$$
it is easy to understand that there exists a $c>0$
(independent of $x$ and $\de$) such that if
$d\leq c\de$ in (\ref{4.1}), then 
\begin{equation}
\label{4.2}
[A^+,A^-]\subset (B^-,B^+).
\end{equation}

To complete the proof of condition (C6) and prove condition (C7), we 
note that 
$$
f(T(\De,p))=\left\{\left(\frac{x+v}{2},y(1+(x+v)^2)\right):\;|v|\leq N\de x(1-x)\right\}.
$$
If $y'=y(1+x^2)+u$, then the $y$ coordinate of any point of the set $Q(\de,q)$ 
is either $y'-\de$ or $y'+\de$.

Let us represent
$$
y'+\de-y(1+(x+v)^2)=\de-y(2xv+v^2)+u.
$$
Since $|v|\leq N\de x(1-x)$, we have the estimates
$$
|xv|\leq N\de x^2(1-x),\quad v^2\leq N^2\de^2x^2(1-x)^2.
$$
If the neighborhood $K$ is small (so that $|x|$ and $|y|$
are small enough), we conclude from the inequality
$|u|\leq c\de x^2$ (with $c$ fixed above) that
$$
y'+\de-y(1+(x+v)^2)>0,\quad |v|\leq N\de x(1-x),
$$
for small $\de$,
i.e., $f(T(\De,p))$ does not intersect the ``upper" component of
$Q(\de,q)$. A similar reasoning is applicable to the ``lower" component.
This proves condition (C7) (and, combined with inclusion (\ref{4.2}),
condition (C6)).

It remains to check conditions (C8) and (C9). 
Let us start with condition (C9). Consider a point $r=(x+v,y+\de)$
of the ``upper" component of $Q(\de,p)$. In this case, $|v|\leq \de x(1-x)$.
The $y$ component of the point $f(r)$ is
$$
(y+\de)(1+(x+v)^2).
$$
If $y'=y(1+x^2)+u$, then the projection of the set $P(\de,q)$ 
to the $y$ axis is the segment $D=[y(1+x^2)+u-\de,y(1+x^2)+u+\de]$.

Let us represent
$$
(y+\de)(1+(x+v)^2)-y(1+x^2)-u-\de=
$$
$$
=\de(x+v)^2+y(2xv+v^2)-u.
$$
The estimates
$$
\de(x+v)^2\geq \de x^2(1-\de)^2
$$
and
$$
|y(2xv+v^2)|\leq |y|(2\de x^2+\de^2x^2)\leq \de x^2/2
$$
(which is valid if $|y|$ and $\de$ do not exceed a small
value independent of $x$) imply that there exists a
constant $c$ (inependent of $x$, $y$, and $\de$) such that
if $u\leq c\de x^2$, then the projection of the point $f(r)$
to the $y$ axis does not belong to the segment $D$.

Applying a similar reasoning to points $r=(x+v,y-\de)$,
we complete the proof of condition (C9).
Estimates of a similar form prove condition (C8).

To complete the proof of Theorem \ref{theor5}, we take into account 
the equality $d=c\de$ and relations (\ref{4.3}).
\end{proof}


\medskip
Received xxxx 20xx; revised xxxx 20xx.
\medskip

\end{document}